\theoremstyle{plain}
\newtheorem{theorem}{Theorem}[section]
\newtheorem{corollary}[theorem]{Corollary}
\newtheorem{lemma}[theorem]{Lemma}
\newtheorem{definition}[theorem]{Definition}
\theoremstyle{remark}
\newtheorem{remark}{Remark}[section]
\newtheorem{example}[remark]{Example}
\newcommand{\ch}{\mbox{ char }}
\newcommand{\glnc}{GL(n, \mathbb{C})}
\newcommand{\ti}{\times}
\begin{document}

\title[tensor products of representations]
{On tensor products of polynomial representations}%
\author{Kevin Purbhoo}%
\address{Department of Mathematics, University
of British Columbia, Vancouver, BC  V6T 1Z2}%
\email{kevinp@math.ubc.ca}%
\author{Stephanie van Willigenburg}%
\address{Department of Mathematics, University
of British Columbia, Vancouver, BC  V6T 1Z2}%
\email{steph@math.ubc.ca}%

\thanks{The authors were supported in part by the
National Sciences and Engineering Research Council of Canada.}%
\subjclass[2000]{05E05, 05E10, 20C30}%
\keywords{polynomial representation, symmetric function, Littlewood-Richardson coefficient, Schur non-negative}%

\begin{abstract} 
We determine the necessary and sufficient combinatorial
conditions for which the tensor product of two irreducible polynomial
representations of $GL(n,\mathbb{C})$ is isomorphic to another. As a consequence we discover families of Littlewood-Richardson coefficients that are non-zero, and a condition on Schur non-negativity.
\end{abstract}

\maketitle 

\section{Introduction}
\label{background} 
It is well known that the representation theory of $GL(n,\mathbb{C})$ is intimately connected to the combinatorics of partitions  \cite[Chapter 7: Appendix 2]{ECII}. Before we address the main problem in this paper that concerns the representations of $GL(n,\mathbb{C})$, we will briefly review this connection.

Recall a \emph{partition} $\lambda$ of a positive integer $m$, denoted $\lambda \vdash m$, is a list of positive integers $\lambda _1 \geq \lambda _2 \geq\cdots \geq \lambda _{\ell(\lambda)}>0$ whose sum is $m$. We call $m$ the \emph{size} of $\lambda$, the $\lambda _i$ the \emph{parts} of $\lambda$ and $\ell(\lambda)$ the \emph{length} of $\lambda$. We also let $\lambda =  0$ be the unique partition of $0$, called the \emph{empty partition} of length $0$. Every partition corresponds naturally to a \emph{(Ferrers) diagram} of \emph{shape} $\lambda$,  which consists of an array of $m$ boxes such that there are $\lambda _i$ left justified boxes in row $i$, where the rows are read from top to bottom. By abuse of notation we also denote this diagram by $\lambda$. In the following example the boxes are denoted by $\ti$.

\begin{example}
$$43211\ =\ \begin{matrix}
\ti&\ti&\ti&\ti\\
\ti&\ti&\ti\\
\ti&\ti\\
\ti\\
\ti\end{matrix}$$
\end{example}

Moreover, given partitions $\lambda , \mu$ such that  $\lambda _i\geq \mu _i$ for all $1\leq i\leq \ell(\mu)$, if we consider the boxes of $\mu$ to be situated in the top left corner of $\lambda$ then we say that $\mu$ is a \emph{subdiagram} of $\lambda$, and the
\emph{skew diagram} of shape $\lambda /\mu$
is the array of boxes contained in $\lambda$ but not in $\mu$. Again we abuse notation and denote this skew diagram by $\lambda /\mu$.

\begin{example}
$$43211/21\ =\ \begin{matrix}
\  &\  &\ti&\ti\\
\  &\ti&\ti\\
\ti&\ti\\
\ti\\
\ti\end{matrix}$$
\end{example}

Furthermore, given a (skew) diagram, we can fill the boxes with positive integers to form a \emph{tableau} $T$ and if $T$ contains 
$c_1(T)$ $1\,$s, $c_2(T)$ $2\,$s,
$\ldots$ then we say it has \emph{content} 
$c(T)=c_1(T)c_2(T)\cdots.$ 
With
this in mind we are able to state the connection between $\glnc$ and partitions of $n$ as follows.

The irreducible polynomial representations $\phi ^\lambda$ of
$GL(n,\mathbb{C})$ are indexed by  partitions $\lambda$
such that $\ell(\lambda)\leq n$ and given two irreducible polynomial representations of $GL(n,\mathbb{C})$,
$\phi ^\mu$ and $\phi ^\nu$,  one has $$\mbox{ char }(\phi ^\mu \otimes
\phi ^\nu)=\sum _{\lambda \atop \ell(\lambda) \leq n} c ^\lambda _{\mu\nu}
\mbox{ char }\phi ^\lambda$$ 
where $c^\lambda _{\mu\nu}$ is the
number of tableaux $T$ of shape $\lambda /\mu$ such that
\begin{enumerate} [(i)] 
\item the entries in the rows weakly increase from left to right 
\item the entries in the columns strictly increase from top to bottom 
\item $c(T)=\nu _1\nu _2\cdots $ 
\item when we read the entries from right to left and top to bottom the number
of $i\,$s we have read is always greater than or equal to the number
of $(i+1)\,$s we have read.  
\end{enumerate}

This  method for computing the $c^\lambda _{\mu\nu}$ is called the
\emph{Littlewood-Richardson rule}. As one might expect the $c^\lambda
_{\mu\nu}$ are called \emph{Littlewood-Richardson coefficients}.
Observe we could have equally well chosen conditions (i)-(iv) to read
\begin{enumerate}[(i)] 
\item the entries in the rows weakly increase
from \emph{right} to \emph{left} 
\item the entries in the columns
strictly increase from \emph{bottom} to \emph{top} 
\item $c(T)=\nu
_1\nu _2\cdots $ 
\item when we read the entries from \emph{left to
right and bottom to top} the number of $i\,$s we have read is always
greater than or equal to the number of $(i+1)\,$s we have read.
\end{enumerate} 
For convenience we will call this the \emph{reverse}
Littlewood-Richardson rule.

\begin{example} To illustrate both rules we now compute $c^{321}_{21,21}$. We will replace each box with the number it contains.

Using the Littlewood-Richardson rule we obtain $c^{321}_{21,21}=2$ from the tableaux 
$\begin{smallmatrix}
&&1\\
&2\\
1
\end{smallmatrix}$ and $
\begin{smallmatrix}
&&1\\
&1\\
2
\end{smallmatrix}$. Meanwhile, using the reverse Littlewood-Richardson rule we also obtain $c^{321}_{21,21}=2$ from the tableaux 
$\begin{smallmatrix}
&&1\\
&2\\
1
\end{smallmatrix}$ and 
$\begin{smallmatrix}
&&2\\
&1\\
1
\end{smallmatrix}$.
\end{example}

Another place where Littlewood-Richardson coefficients arise is in
the algebra of symmetric functions 
$\Lambda = \oplus _{m\geq 0}\Lambda ^m$, 
which is a subalgebra of $\mathbb{Z} [[x_1, x_2, \ldots ]]$ invariant under the natural action of the symmetric group.
Each $\Lambda ^m$ is spanned by $\{ s_\lambda \}_{\lambda \vdash m}$ where $s_0:=1$ and \begin{equation}
\label{schur}
s_\lambda :=\sum _T x^T.
\end{equation}
The sum is over all tableaux $T$ that satisfy conditions (i) and (ii) of the
Littlewood-Richardson rule and $x^T:=\prod _i x_i^{c_i(T)}$. For partitions $\lambda, \mu, \nu$ the  structure coefficients of these \emph{Schur functions}  satisfy $$s_\mu s_\nu = \sum _\lambda c^\lambda _{\mu\nu} s_\lambda$$
where the $c^\lambda _{\mu\nu}$ are again  Littlewood-Richardson 
coefficients.

 

%
%
%
%

Similarly we can define the algebra of symmetric polynomials on $n$
variables by setting $x_{n+1}=x_{n+2}=\cdots = 0$ above and working
with \emph{Schur polynomials} $s_\lambda (x_1,  \ldots ,x_n)$. Observe that by the definition ~\eqref{schur} if $\ell(\lambda)>n$ then $s_\lambda (x_1,  \ldots ,x_n)=0.$ The
motivation for restricting to $n$ variables is that the irreducible representations
of $\glnc$ can be indexed such that 
\begin{equation}
\label{chartoschur}
\ch \phi ^\lambda = s_\lambda (x_1,  \ldots ,x_n).
\end{equation}
See \cite{MacD, ECII} for further details.

\section{Identical tensor products} 

We now begin to address the main
problem of the paper, that is, to determine for which partitions $\lambda, \mu, \nu, \rho$ we have
\begin{equation}
\label{tensorequal}
\phi ^\lambda \otimes \phi ^\mu \cong \phi ^\nu \otimes \phi ^\rho
\end{equation}
for irreducible polynomial representations of $\glnc$.

For ease of notation, we assume $n$  is fixed throughout the
remainder of the paper. Additionally, since $s_\lambda (x_1, 
\ldots ,x_n)=0$ for $\ell(\lambda)>n$, we assume that all partitions
have at most $n$ parts.  We extend our partitions to exactly $n$ parts 
by appending a string of $n-\ell (\lambda)$ $0\,$s. For example, if
$n=4$ then $\lambda =32$ becomes $\lambda = 3200$.

We now define an operation on diagrams that will be useful later.

\begin{definition}\label{stcut} Given partitions $\lambda$ and $\mu$
and an integer $s$ such that  $0 \leq s \leq n-1$, the \emph{$s$-cut} of $\lambda$
and $\mu$ is the partition whose parts are 
\begin{gather*}
\lambda_1+\mu _1, \lambda _2 +\mu _2, \ldots , \lambda _s +\mu _s,\\
\lambda_{s+1} + \mu_n, \lambda_{s+2}+\mu_{n-1}, \ldots,
\lambda_{n-1}+\mu_{s+2}, \lambda_n + \mu_{s+1}
\end{gather*}
listed in weakly decreasing order.
\end{definition}

\begin{remark} Diagrammatically we can think of the $s$-cut of
$\lambda$ and $\mu$ as 
\begin{enumerate}[(i)] 
\item aligning the top rows of $\lambda$ and $\mu$ then 
\item cutting the diagrams $\lambda$ and $\mu$ between the $s$ and $s+1$ 
rows 
\item taking the rows of $\mu$ (or $\lambda$) below the cut and rotating them 
by $180 ^\circ$
\item appending the newly
aligned rows and sorting into weakly decreasing row length to make
a diagram.  
\end{enumerate} 
\end{remark}

\begin{example} If $n = 6$, then the $2$-cut of $432110$ and $543200$ is $973321$. This example can be viewed diagrammatically as the following.
$$\begin{matrix}
\ti&\ti&\ti&\ti\\
\ti&\ti&\ti\\\hline
\ti&\ti\\
\ti\\
\ti\\
&\end{matrix}\qquad\begin{matrix} 
\ti&\ti&\ti&\ti&\ti\\
\ti&\ti&\ti&\ti\\\hline
&\\
&\\
&\ti&\ti\\
\ti&\ti&\ti\\
\end{matrix}\quad \leadsto \quad
\begin{matrix}
\ti&\ti&\ti&\ti&\ti&\ti&\ti&\ti&\ti\\
\ti&\ti&\ti&\ti&\ti&\ti&\ti\\
\ti&\ti&\ti\\
\ti&\ti&\ti\\
\ti&\ti\\
\ti\\
\end{matrix}\ $$
\end{example}

It transpires that the $s$-cut of $\lambda$ and $\mu$ yields a
condition on Littlewood-Richardson coefficients.

\begin{lemma} 
\label{positivecoeff}
If $\lambda$, $\mu$ and $s$ are  as in Definition
\ref{stcut} and $\kappa$ is the $s$-cut of $\lambda$ and $\mu$  then 
$c^\kappa_{\lambda\mu}>0$.  
\end{lemma}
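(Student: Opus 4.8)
The plan is to exhibit an explicit Littlewood--Richardson filling of the skew shape $\kappa/\lambda$ with content $\mu$, thereby showing $c^\kappa_{\lambda\mu}\geq 1$. The natural candidate comes directly from the diagrammatic description of the $s$-cut in the Remark: the rows of $\kappa$ are built by gluing the top $s$ rows of $\mu$ onto the right ends of rows $1,\dots,s$ of $\lambda$, and gluing the reversed rows $s+1,\dots,n$ of $\mu$ onto the right ends of rows $s+1,\dots,n$ of $\lambda$, after which everything is sorted into weakly decreasing order. So I would first fill the boxes of $\kappa/\lambda$ before sorting: in row $i$ for $1\leq i\leq s$, place $\mu_i$ copies of the entry $i$; in row $i$ for $s+1\leq i\leq n$, place $\mu_{n+s+1-i}$ copies of the entry $i$ (these are the reversed rows of $\mu$). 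This filling automatically has content $\mu$ and obviously has weakly increasing rows since each row is constant.

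The substance of the proof is checking that this tableau (after sorting the rows of $\kappa$ into weakly decreasing order, which permutes the rows but preserves the row contents) can be made to satisfy the remaining Littlewood--Richardson conditions. Here I would appeal to the \emph{reverse} Littlewood--Richardson rule introduced in the excerpt, since the reversed-rows construction of the $s$-cut matches the ``bottom to top'' reading order naturally: reading left to right and bottom to top, in the bottom block (rows $s+1,\dots,n$ of $\lambda$) we encounter entries $n, n-1, \dots, s+1$ contributing $\mu_{s+1}$ copies of $n$, then $\mu_{s+2}$ copies of $n-1$, and so on, and since $\mu_{s+1}\geq\mu_{s+2}\geq\cdots$ the lattice-word condition (iv) is maintained throughout this block; then in the top block we read $\mu_s$ copies of $s$, $\mu_{s-1}$ copies of $s-1$, \dots, $\mu_1$ copies of $1$, and again $\mu_s\leq\mu_{s-1}\leq\cdots\leq\mu_1$ is precisely what keeps the running counts valid as the index decreases. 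The column-strictness condition (ii) is where sorting matters: after sorting, I must verify that no column of $\kappa/\lambda$ contains a repeated entry, equivalently that rows with equal entries that got stacked vertically do not overlap in columns. Since a row carrying entry $i$ has its skew boxes in columns $\lambda_i+1,\dots,\kappa$-part, and rows carrying the \emph{same} entry do not occur (each entry $i$ appears in exactly one row of the unsorted picture), strict column increase reduces to checking the interaction between consecutive sorted rows, which follows because the sorted parts are weakly decreasing while the entries assigned are, within each block, monotone in a compatible direction; a short case analysis on whether two adjacent sorted rows come from the top block, the bottom block, or one of each completes this.

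The main obstacle I anticipate is precisely the bookkeeping forced by the sorting step in condition (ii): the entries are assigned to rows \emph{before} sorting, so after sorting into weakly decreasing row length the entries need no longer be monotone down the rows, and one must argue that wherever a larger entry ends up above a smaller-or-equal one in the same column, strict increase still holds. I would handle this by showing that if two boxes lie in the same column $c$ of $\kappa/\lambda$, say in sorted rows $i<i'$, then the original (pre-sorting) rows they came from had lengths both at least $c$ but starting points (i.e.\ $\lambda$-parts) with the upper one $\leq$ the lower one, and tracking how the entry labels depend on the $\mu$-parts; the key inequality is that a longer glued row forces either a larger $\lambda$-part or, within a block, a larger $\mu$-contribution and hence a suitable entry. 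Once the three conditions are verified for this single explicit tableau, it witnesses $c^\kappa_{\lambda\mu}>0$ and the lemma follows.
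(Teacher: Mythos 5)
There is a genuine gap. The row-constant filling you propose cannot satisfy either the Littlewood--Richardson rule or the reverse rule, and the difficulties you flag are not mere bookkeeping. As a first sign of trouble, with $\mu_{n+s+1-i}$ copies of entry $i$ in row $i$ of the bottom block, the resulting content is $\mu_1\cdots\mu_s\,\mu_n\,\mu_{n-1}\cdots\mu_{s+1}$, not $\mu$; and your lattice-word check is backwards, since reading bottom to top one first records $\mu_{s+1}$ copies of $n$ before any $(n-1)$'s, violating condition (iv) immediately. Presumably you meant entry $n+s+1-i$ in row $i$, which repairs the content. But then the column condition cannot be satisfied by any single reading convention: the top block carries entry $i$ in row $i$, so entries strictly \emph{increase} going down columns (the ordinary rule's requirement), while the corrected bottom block carries entry $n+s+1-i$ in row $i$, so entries strictly \emph{decrease} going down (the reverse rule's requirement). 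For instance with $n=3$, $s=1$, $\lambda=\mu=(1,1,1)$, the three skew boxes all lie in column $2$ and receive entries $1,3,2$ top to bottom, which is neither strictly increasing nor strictly decreasing. There is also a seam problem with the lattice word under the reverse reading: one reads the bottom block (entries $\geq s+1$) before the top block (entries $\leq s$), so the count of $(s+1)$'s exceeds the count of $s$'s (still zero) as soon as any $(s+1)$ is read.

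The missing ingredient is exactly the bijection $\psi$ between LR and reverse-LR tableaux that the paper's proof relies on. The paper fills the top $s$ rows as you do, but builds a different, generally non-row-constant reverse-LR tableau $T'$ on the bottom block by placing $\mu_{s+1}$ $1$'s into the bottoms of the columns from left to right, then $\mu_{s+2}$ $2$'s, and so on; it then applies $\psi$ to convert $T'$ into an \emph{ordinary} LR tableau $T''$ of the same shape and content, shifts its entries up by $s$, and glues the result below the top block. Because every bottom-block entry then exceeds every top-block entry, column strictness holds across the seam, and the concatenated reading word (right to left, top to bottom) remains a lattice word. Without $\psi$ --- or an equivalent mechanism such as jeu de taquin rectification --- the direct attempt to write down a row-constant filling of $\kappa/\lambda$ does not, and in general cannot, produce a valid Littlewood--Richardson tableau.
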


\begin{proof} 
Observe that since the Littlewood-Richardson and
reverse Littlewood Richardson rule yield the same coefficients
there must be a bijection, $\psi$, between the tableaux generated
by each. This bijection will play a key role in the proof.  

Consider
creating a tableau $T$ of shape $\kappa / \lambda$ where $\kappa
_i=\lambda_i+\mu_i$ for $1\leq i\leq s$ that will contribute towards
the coefficient $c^\kappa _{\lambda\mu}$. If we use the Littlewood-Richardson
rule then it is clear that for $1\leq i\leq s$ we must fill the boxes of the $i$-th row     with the $\mu_i$ $i\,$s. Now all that remains for us to do is to fill the remaining boxes of $T$ with $\mu_{s+1}$ $(s+1)\,$s, $\ldots\,$, $\mu_{n}$ $n\,$s. To do this we create a tableau $T'$ of shape $\kappa _{s+1}\cdots \kappa _n/\lambda _{s+1}\cdots \lambda _n = \kappa / \kappa
_1\cdots \kappa _s \lambda _{s+1}\cdots \lambda _n$ that will
contribute towards the coefficient $c^\gamma _{\alpha\beta}$ where
$\alpha = \lambda _{s+1}\cdots \lambda _n$, $\beta = \mu
_{s+1}\cdots \mu _n$ and $\gamma = \kappa _{s+1}\cdots \kappa _n$. 
We do this as follows.

Fill the box at the bottom of each column from left to right with
$\mu _{s+1}$ $1\,$s. Then repeat on the remaining boxes with the $\mu
_{s+2}$ $2\,$s. Iterate this procedure until the boxes are full. Observe
by the reverse Littlewood-Richardson rule that this filling contributes $1$ to
the coefficient $c^\gamma _{\alpha\beta}$.  Now using $\psi$, create
a tableau $T''$ of the same shape that satisfies the Littlewood-Richardson
rule and increase each entry by $s$, forming a  tableau $T'''$.
Placing the entries of $T'''$ in the naturally corresponding boxes
of $T$ we see we have a tableau that contributes $1$ to the coefficient
 $c^\kappa _{\lambda\mu}$ by the Littlewood-Richardson rule and
indeed $c^\kappa _{\lambda\mu}>0$.  
\end{proof}

\begin{definition}\label{ordset} 
If $\lambda$, $\mu$ and $s$ are  as in Definition
\ref{stcut} then the \emph{$s$-poset} of 
$\lambda$ and $\mu$ is the set of all partitions $\kappa$ such
that
\begin{enumerate}[(i)]
\item $c_{\lambda \mu}^\kappa > 0$,
\item $\kappa_i = \lambda_i + \mu_i$ for all $1 \leq i \leq s$,
\end{enumerate}
which are ordered lexicographically, that is, $\kappa > \kappa'$ if and only if there exists  some $i$, where $1\leq i\leq n$, such that
$\kappa_1 = \kappa'_1$, $\ldots\ $, $\kappa_{i-1} = \kappa'_{i-1}$ and 
$\kappa_i > \kappa'_i$.
\end{definition}

\begin{lemma}
\label{minelement}
If $\lambda$, $\mu$ and $s$ are  as in Definition
\ref{stcut} then the $s$-cut of $\lambda$ and $\mu$ is the 
unique minimal element in
the $s$-poset of $\lambda$ and $\mu$.
\end{lemma}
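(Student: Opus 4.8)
The plan is to show two things: first, that the $s$-cut $\kappa$ actually belongs to the $s$-poset (so the poset is nonempty and $\kappa$ is a candidate for the minimum), and second, that every other element of the poset is lexicographically larger.

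For the first part, membership of $\kappa$ in the $s$-poset follows immediately: condition (ii) of Definition \ref{ordset} holds by the very definition of the $s$-cut (its first $s$ parts, before sorting, are $\lambda_i+\mu_i$), and since sorting into weakly decreasing order does not change the multiset of the first $s$ entries once we check $\lambda_s+\mu_s$ dominates the remaining entries — actually the cleaner route is: condition (i), $c^\kappa_{\lambda\mu}>0$, is exactly Lemma \ref{positivecoeff}, and condition (ii) is built into Definition \ref{stcut}. So $\kappa$ lies in the $s$-poset.

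For the second part — the genuinely substantive step — I would argue that any partition $\kappa'$ satisfying (i) and (ii) must satisfy $\kappa' \geq \kappa$ lexicographically. The key tool is the standard fact that $c^{\kappa'}_{\lambda\mu}>0$ forces $\kappa' \supseteq \lambda$ and $\kappa' \supseteq \mu$ (each LR-tableau sits in a skew shape $\kappa'/\lambda$, and symmetrically $\kappa'/\mu$), and moreover that $|\kappa'| = |\lambda|+|\mu|$. Combined with (ii), which pins down $\kappa'_1,\dots,\kappa'_s$ to be exactly $\lambda_i+\mu_i$, the question reduces to the tail: among partitions $\gamma = (\kappa'_{s+1}, \dots, \kappa'_n)$ with $\gamma_1 \le \lambda_s+\mu_s$, with $|\gamma| = \sum_{i>s}\lambda_i + \sum_{i>s}\mu_i$, containing both $(\lambda_{s+1},\dots,\lambda_n)$ and $(\mu_{s+1},\dots,\mu_n)$, and with $c$-coefficient positive, I must show the $s$-cut tail is lexicographically least. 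Containment of $\alpha=(\lambda_{s+1},\dots,\lambda_n)$ gives $\gamma_{s+1}\ge \lambda_{s+1}$; containment of $\beta=(\mu_{s+1},\dots,\mu_n)$ rearranged — here one uses that $\gamma \supseteq \mu$ restricted appropriately, so that $\gamma_{s+1} \ge \mu_n$ is false in general, so instead I should use the reverse LR rule / the $180^\circ$ rotation symmetry exactly as in the proof of Lemma \ref{positivecoeff}: the tail $\gamma$ must contain $\alpha$, and $\gamma$ rotated must contain $\beta$ rotated, forcing $\gamma_n \geq$ (appropriate part). Pushing this coordinate by coordinate — at each position $i$ from $s+1$ upward, the minimal legal value of $\gamma_i$ given the constraints and given that $\gamma_{s+1},\dots,\gamma_{i-1}$ have already been minimized is precisely the corresponding $s$-cut part — yields $\kappa' \geq \kappa$, with equality only for $\kappa'=\kappa$, giving uniqueness.

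The main obstacle is the second part: making precise why the $s$-cut part values are simultaneously forced as lower bounds and achievable. I expect the cleanest argument uses the dominance/containment constraints from $c^{\kappa'}_{\lambda\mu}>0$ together with a counting identity ($|\kappa'|$ is fixed), so that minimizing the early tail coordinates lexicographically forces the later ones up, and the unique configuration saturating every bound is exactly the sorted list in Definition \ref{stcut}. A subtlety to handle carefully is the interaction between the lexicographic order and the sorting step in the definition of the $s$-cut — one must verify that the listed parts $\lambda_{s+1}+\mu_n, \lambda_{s+2}+\mu_{n-1},\dots$ are already essentially in the right order relative to the forced bounds, or track the sort explicitly. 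I would also double-check the edge case $s=0$, where condition (ii) is vacuous and $\kappa$ is just the sorted list $\lambda_i+\mu_{n+1-i}$, to make sure the induction base is sound.
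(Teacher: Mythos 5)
Your Part~1 (that the $s$-cut $\kappa$ lies in the $s$-poset) is correct and matches the paper: condition~(ii) is built into Definition~\ref{stcut}, and condition~(i) is exactly Lemma~\ref{positivecoeff}.

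Part~2 is where the proposal has a genuine gap, and you yourself flag it as ``the main obstacle.'' Your plan is to argue coordinate-by-coordinate, using the containment constraints $\kappa' \supseteq \lambda$, $\kappa' \supseteq \mu$, the size identity, and a loosely described ``rotation / reverse-LR'' constraint. These containment constraints are too weak to yield the antidiagonal lower bounds you need. For instance, with $s=0$, $n=3$, $\lambda=(3,0,0)$, $\mu=(1,1,1)$, the $0$-cut is $(4,1,1)$, yet containment only forces $\gamma_1 \ge 3$; to rule out $\gamma=(3,2,1)$ (which also contains both $\lambda$ and $\mu$ and has the right size) you must actually verify $c^{(3,2,1)}_{\lambda\mu}=0$, which containment does not give you. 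The ``rotate $\gamma$ and contain $\beta$ rotated'' step is never pinned down to a precise inequality, and the naive candidates one might extract from LR-duality turn out to be vacuous or to reproduce the same containments. In short, the plan identifies the right target (the lex-min claim for the tail, reducing to the case $s=0$) but does not supply the mechanism that makes it true.

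The paper's proof uses a genuinely different mechanism. Given any $\xi$ in the $s$-poset with witness tableau $U$, it forces rows $1,\dots,s$ of $U$, extracts the tail subtableau, subtracts $s$ from entries, and applies the LR/reverse-LR bijection $\psi$ to get a reverse-LR tableau $U'$. It then compares $U'$ with the explicit tableau $T'$ from Lemma~\ref{positivecoeff} (which was constructed to be as left-justified and low as possible), matching the $k$-th occurrence of each value $i$ and observing that every box of $T'$ moves rightwards and upwards to reach $U'$. Lexicographic minimality of $\kappa$ falls out globally from this box-moving observation, rather than from any coordinatewise bound. To repair your proposal you would essentially need to import this idea: either invoke $\psi$ and the box-moving argument directly, or prove the corresponding lex-minimality of the antidiagonal partition by a comparably global argument; the containment-and-size bookkeeping you outline will not suffice on its own.
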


\begin{proof}
Let $\xi$ be any element in the $s$-poset of $\lambda$ and $\mu$ and let $U$ be any tableau that will contribute towards the coefficient $c^\xi _{\lambda\mu}$ via the Littlewood-Richardson rule. As in the proof of Lemma~\ref{positivecoeff} it is clear that for $1\leq j\leq s$ we have that $j$ appears in every box of row $j$. Now consider the natural subtableau  of shape $\xi _{s+1}\cdots \xi _n/\lambda _{s+1}\cdots \lambda _n$, which we denote by $\overline{U}$. Note that if we subtract $s$ from every entry in  $\overline{U}$ then we obtain a tableau that contributes towards $c^{(\xi _{s+1}\cdots \xi _{n})} 
_{(\lambda _{s+1}\cdots \lambda _{n})(\mu_{s+1}\cdots \mu_n)}$ via the Littlewood-Richardson rule. If we then apply the bijection $\psi$ to rearrange these new entries, we obtain a tableau $U'$ that contributes towards $c^{(\xi _{s+1}\cdots \xi _{n})} 
_{(\lambda _{s+1}\cdots \lambda _{n})(\mu_{s+1}\cdots \mu_n)}$
via the reverse Littlewood-Richardson rule.

Now let $\kappa$ be the $s$-cut of $\lambda$ and $\mu$.  Let $T$ and
$T'$ be the tableaux constructed in the proof of 
Lemma~\ref{positivecoeff}.  Recall that $T$ contributes
towards the coefficient $c^\kappa _{\lambda\mu}$ 
via the Littlewood-Richardson rule, and
that $T'$  contributes towards $c^{(\kappa _{s+1}\cdots \kappa _{n})}
_{(\lambda _{s+1}\cdots \lambda _{n})(\mu_{s+1}\cdots \mu_n)}$ via
the reverse Littlewood-Richardson rule.

We now consider transforming $T'$ into $U'$ as follows.  Since $T'$
and $U'$ both have content $\mu$, we can map the boxes of $T'$
bijectively to the boxes of $U'$ 
 such that the $k$-th box containing $i$ from the left 
in $T'$ maps to the $k$-th box containing $i$ from the left in $U'$.  This bijection
factors as follows.  First move each box in $T'$ horizontally, so that
it is in the same  column as the corresponding box in $U'$.  Then move each box  vertically to form $U'$.  By the
construction of $T'$ the entries are as left justified and low as possible,
and so this transformation necessarily moves each box rightwards and
upwards.  It follows that $\kappa$, the shape of $T'$, is lexicographically
less than or equal to $\xi$, the shape of $U'$, and we are done.
\end{proof}


Recall that $\lambda_n$ is the number of columns of length $n$ in the
diagram  $\lambda$, and thus $(\lambda_n)^n$ is a subdiagram of $\lambda$.
Define $\lambda^- := \lambda/(\lambda_n)^n$.  Notice that $\lambda^-$
is a Ferrers diagram, with at most $n-1$ rows, and the number of
columns of length $n-1$ is $\lambda^-_{n-1}$.  We therefore define
$\lambda^{--} := \lambda^-/(\lambda^-_{n-1})^{n-1}$.  Notice that
by \eqref{schur} we have the factorization
\begin{equation}
\label{lambdaminus}
s_\lambda(x_1, \dots, x_n) = (x_1 \cdots x_n)^{\lambda_n}
s_{\lambda^-}(x_1, \dots, x_n),
\end{equation}
and moreover $x_1 \cdots x_n$ does not divide 
$s_{\lambda^-}(x_1, \dots, x_n)$.

\begin{theorem} 
$\phi ^\lambda \otimes \phi ^\mu \cong \phi ^\nu \otimes \phi ^\rho$ 
as representations of $GL(n)$
if and only if
$\lambda_n+\mu_n=\nu_n+\rho_n$ and
$\{\lambda^- ,\mu^- \}=\{\nu^-, \rho^-\}$ as multisets.  
\end{theorem}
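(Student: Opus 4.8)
The plan is to translate the representation-theoretic statement \eqref{tensorequal} into an identity of symmetric polynomials via \eqref{chartoschur}, so that what must be characterized is when
$$
s_\lambda(x_1,\dots,x_n)\, s_\mu(x_1,\dots,x_n) = s_\nu(x_1,\dots,x_n)\, s_\rho(x_1,\dots,x_n)
$$
in $\Lambda^m$ restricted to $n$ variables. The easy direction is to verify that the two combinatorial conditions are sufficient. If $\lambda_n+\mu_n=\nu_n+\rho_n$ then the prefactor $(x_1\cdots x_n)^{\lambda_n+\mu_n}$ pulled out of the left side by \eqref{lambdaminus} matches the one on the right, and if $\{\lambda^-,\mu^-\}=\{\nu^-,\rho^-\}$ as multisets then $s_{\lambda^-}s_{\mu^-}=s_{\nu^-}s_{\rho^-}$ trivially, so multiplying back the common prefactor gives the desired equality of products; hence the tensor products are isomorphic.

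The substance is the necessity direction. First I would reduce to the case where $x_1\cdots x_n$ divides neither side: using \eqref{lambdaminus} on all four partitions, $s_\lambda s_\mu = (x_1\cdots x_n)^{\lambda_n+\mu_n} s_{\lambda^-}s_{\mu^-}$ and similarly for $\nu,\rho$, and since by the last sentence before the theorem $x_1\cdots x_n$ divides neither $s_{\lambda^-}s_{\mu^-}$ nor $s_{\nu^-}s_{\rho^-}$ (a product of polynomials not divisible by the irreducible-on-the-relevant-locus factor $x_1\cdots x_n$ is again not divisible by it, working in the UFD $\mathbb{Z}[x_1,\dots,x_n]$), comparing the exact power of $x_1\cdots x_n$ dividing each side forces $\lambda_n+\mu_n=\nu_n+\rho_n$ and then $s_{\lambda^-}s_{\mu^-}=s_{\nu^-}s_{\rho^-}$. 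So it remains to show: if $\alpha,\beta,\gamma,\delta$ are partitions with at most $n-1$ parts (i.e. $\alpha_n=\beta_n=\gamma_n=\delta_n=0$, these being the $\lambda^-$ etc.) and $s_\alpha s_\beta = s_\gamma s_\delta$ in $n$ variables, then $\{\alpha,\beta\}=\{\gamma,\delta\}$.

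To prove this last assertion I would exploit the lexicographically extremal terms of the product, using the machinery already set up. The key observation is that for the $s$-poset of Definition~\ref{ordset} with $s=0$, Lemma~\ref{minelement} identifies the $0$-cut of $\alpha$ and $\beta$ — call it $\kappa^{\min}$ — as the unique lex-minimal partition $\kappa$ with $c^\kappa_{\alpha\beta}>0$; symmetrically, reflecting the lex order (or taking the $0$-cut in the opposite orientation) identifies the lex-maximal such $\kappa$, which by inspection of Definition~\ref{stcut} with $s=0$ is simply $\alpha+\beta := (\alpha_1+\beta_1,\alpha_2+\beta_2,\dots)$ when $\alpha,\beta$ have complementary-free support, and more generally the lex-largest partition appearing in $s_\alpha s_\beta$ is $\alpha+\beta$ coordinatewise (this is the classical fact that the leading monomial of $s_\alpha s_\beta$ in dominance/lex is $x^{\alpha+\beta}$, with coefficient $1$). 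Thus $s_\alpha s_\beta = s_\gamma s_\delta$ immediately gives $\alpha+\beta=\gamma+\delta$ and $\kappa^{\min}(\alpha,\beta)=\kappa^{\min}(\gamma,\delta)$. I would then argue by induction on $m=|\alpha|+|\beta|$: knowing $\alpha_1+\beta_1=\gamma_1+\delta_1$, use the $1$-cut analysis — Lemma~\ref{minelement} with $s=1$ describes the lex-minimal $\kappa$ with first part $\alpha_1+\beta_1$ — to pin down, say, $\min(\alpha_1,\beta_1)=\min(\gamma_1,\delta_1)$ and $\max(\alpha_1,\beta_1)=\max(\gamma_1,\delta_1)$ from the extremal shapes, so after reindexing we may assume $\alpha_1=\gamma_1$ and $\beta_1=\delta_1$; then peel off the first row (a standard Schur-function manipulation: dividing out the largest part) to reduce to strictly smaller partitions and invoke the inductive hypothesis.

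The main obstacle I anticipate is the inductive peeling step: making rigorous the passage from $s_\alpha s_\beta = s_\gamma s_\delta$ with matched leading parts to an equation among the ``row-reduced'' partitions, since one cannot literally divide Schur functions, and one must ensure the $n$-variable truncation does not destroy the needed cancellations. I expect to handle this by working with the lex-extremal terms of the product expansions and a careful double counting using both the Littlewood-Richardson rule and its reverse (the bijection $\psi$ from the proof of Lemma~\ref{positivecoeff}), tracking how the $s$-cut shapes from Lemma~\ref{minelement} vary as $s$ ranges over $0,1,\dots,n-1$; monotonicity of these cuts in the lex order across the four partitions should force the equality of the multisets one coordinate at a time. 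A secondary subtlety is bookkeeping when parts of $\alpha$ and $\beta$ coincide, so that the extremal shapes degenerate; this can be absorbed into the induction by treating equal parts as a block.
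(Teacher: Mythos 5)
Your setup agrees with the paper: translating \eqref{tensorequal} into \eqref{schurequal} via \eqref{chartoschur}, the easy direction via \eqref{lambdaminus}, and the divisibility argument that forces $\lambda_n+\mu_n=\nu_n+\rho_n$ and reduces to showing $s_{\lambda^-}s_{\mu^-}=s_{\nu^-}s_{\rho^-}$ implies $\{\lambda^-,\mu^-\}=\{\nu^-,\rho^-\}$. You also correctly identify the two relevant extremal terms: the lex-maximal shape appearing in $s_\alpha s_\beta$ is the coordinatewise sum $\alpha+\beta$, and by Lemma~\ref{minelement} the lex-minimal one is the $0$-cut. Up to this point you are following the paper.

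The gap is in the inductive engine. You propose to induct on $|\alpha|+|\beta|$, extract $\{\alpha_1,\beta_1\}=\{\gamma_1,\delta_1\}$ from the $0$- or $1$-cut, and then ``peel off the first row.'' Both steps are unjustified, and you acknowledge the second as a serious obstacle without resolving it. The first step does not work as stated: the parts of the $s$-cut are sums $\alpha_i+\beta_{n+1-i}$ (or $\alpha_i+\beta_i$ for $i\leq s$), and these sums can dominate $\alpha_1$ and $\beta_1$. For instance with $n=4$, $\alpha=\beta=(3,3,3,0)$, the $0$-cut is $(6,6,3,3)$, whose largest part $6$ exceeds $\max(\alpha_1,\beta_1)=3$; so the largest parts of $\alpha,\beta$ are not visible from the extremal shapes in any straightforward way. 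More fundamentally, the $s$-cut of $\alpha,\beta$ depends on all their parts at once, so matching $s$-cuts of $(\alpha,\beta)$ and $(\gamma,\delta)$ does not isolate $\{\alpha_1,\beta_1\}$ without already controlling the remaining parts. And ``peeling off the first row'' is not an operation on symmetric polynomials that you have made sense of --- you cannot divide Schur functions --- so even if you had matched leading rows, you have no recursion to run.

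The paper sidesteps both problems by inducting on $n$ instead: setting $x_n=0$ in \eqref{schurequalminus} gives the same identity in $n-1$ variables, and the inductive hypothesis (the theorem for $GL(n-1)$) then yields $\lambda^-_{n-1}+\mu^-_{n-1}=\nu^-_{n-1}+\rho^-_{n-1}$ \emph{and} $\{\lambda^{--},\mu^{--}\}=\{\nu^{--},\rho^{--}\}$ in one stroke. Only at that point, with $\lambda^{--}=\nu^{--}=:\alpha$ and $\mu^{--}=\rho^{--}=:\beta$ already in hand, are the $s$-cuts compared for all $s=0,\dots,n-1$; this comparison now involves only the four unknown scalars $a,b,c,d$, because all the other parts of the two $s$-cuts are manifestly equal, and a short case analysis closes the argument. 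In short: the $s$-cut lemma is the finishing move, not the inductive engine, and the induction runs on the number of variables rather than on the size of the partitions. Your draft would need to adopt this (or find a genuinely different resolution of the peeling step) to be a complete proof.
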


\begin{proof} We will show that
\begin{equation}
\label{schurequal}
s_\lambda(x_1,\ldots ,x_n)s_\mu(x_1,\ldots ,x_n)=  
s_\nu(x_1,\ldots ,x_n)s_\rho(x_1,\ldots ,x_n)
\end{equation}
if and only if $\lambda_n+\mu_n=\nu_n+\rho_n$ and
$\{\lambda^- ,\mu^- \}=\{\nu^-, \rho^-\}$. 
The theorem then follows, using \eqref{chartoschur}.

One direction is immediate. Suppose $\lambda_n+\mu_n=\nu_n+\rho_n$
and $\{\lambda^- ,\mu^- \}=\{\nu^-, \rho^-\}$, then
by \eqref{lambdaminus} we have 
\begin{multline*}
s_\lambda(x_1,\ldots ,x_n)s_\mu(x_1,\ldots ,x_n)\\ 
=(x_1\cdots x_n)^{\lambda_n+\mu_n} 
s_{\lambda^-}(x_1,\ldots ,x_n)s_{\mu^-}(x_1,\ldots ,x_n)\\ 
=(x_1\cdots x_n)^{\nu_n+\rho_n} 
s_{\nu^-}(x_1,\ldots ,x_n)s_{\rho^-}(x_1,\ldots ,x_n)\\ 
= s_\nu(x_1,\ldots ,x_n)s_\rho(x_1,\ldots ,x_n).
\end{multline*}

For the opposite direction, assume that \eqref{schurequal} holds.
We first show that $\lambda_n + \mu_n = \nu_n + \rho_n$.  If
they were not equal, say $\lambda_n + \mu_n > \nu_n +\rho_n$,
then by \eqref{lambdaminus}, we would have
$$(x_1\cdots x_n)^{\lambda_n + \mu_n - \nu_n -\rho_n}
s_{\lambda^-}(x_1,\ldots ,x_n)s_{\mu^-}(x_1,\ldots ,x_n)
= s_{\nu^-}(x_1,\ldots ,x_n)s_{\rho^-}(x_1,\ldots ,x_n),$$
which is impossible since $x_1\cdots x_n$ does not divide
the right hand side.  Similarly we cannot have 
$\lambda_n + \mu_n < \nu_n + \rho_n$.  Thus,  we
see furthermore that
\begin{equation}
\label{schurequalminus}
s_{\lambda^-}(x_1,\ldots ,x_n)s_{\mu^-}(x_1,\ldots ,x_n)=  
s_{\nu^-}(x_1,\ldots ,x_n)s_{\rho^-}(x_1,\ldots ,x_n).
\end{equation}

Let $S(n)$ be the assertion that the equation
\eqref{schurequalminus} holds
only if $\{\lambda^- ,\mu^- \}=\{\nu^-, \rho^-\}$.  
To complete the proof of the theorem, it remains to show that
$S(n)$ is true for all $n$.  We prove this by induction.

The base case $n=1$ is trivial, since each of $\lambda^-, \mu^-,
\nu^-, \rho^-$ is necessarily the empty partition.


Now assume that $S(1), \ldots, S(n-1)$ are true.  In particular this
assumption implies that the theorem holds for smaller values of $n$.
Furthermore, assume that \eqref{schurequalminus} holds.
Let 
\begin{xalignat*}{2}
a &:= \lambda^-_{n-1} & c &:= \nu^-_{n-1} \\
b &:= \mu^-_{n-1}  & d &:= \rho^-_{n-1}.
\end{xalignat*}
Since \eqref{schurequalminus} implies
$$
s_{\lambda^-}(x_1,\ldots ,x_{n-1})s_{\mu^-}(x_1,\ldots ,x_{n-1})=  
s_{\nu^-}(x_1,\ldots ,x_{n-1})s_{\rho^-}(x_1,\ldots ,x_{n-1}),
$$
by our inductive hypothesis we must have 
$$a +b = c+d \quad\mbox{and}\quad
\{\lambda^{--}, \mu^{--}\} =
\{\nu^{--}, \rho^{--}\}.$$
Assume without loss of generality that 
$\lambda^{--} = \nu^{--} =: \alpha$ and 
$\mu^{--} = \rho^{--} =: \beta$.  To show
that 
$\{\lambda^-, \mu^-\} = \{\nu^-, \rho^-\}$,
we need to check that $a=c$ and $b=d$, or that
  $a=d$, $b=c$ and  $\alpha = \beta$.


To show this we note that if \eqref{schurequalminus}  holds then for all $s$, $0 \leq s \leq n-1$,
the $s$-poset 
of $\lambda^-$ and $\mu^-$, must be the same as the $s$-poset
of $\nu^-$ and $\rho^-$. Thus by Lemma \ref{minelement},
the $s$-cut of $\lambda^-$ and $\mu^-$ must be the same as the
$s$-cut of $\nu^-$ and $\rho^-$.

%
%

The $s$-cut of $\lambda^-$ and $\mu^-$ has part sizes
\begin{equation*}
\begin{gathered}[t]
a+b+\alpha _j+\beta _j  \\
a+b+\alpha _{s+j}+\beta _{n-j+1} \\ 
a+\alpha _{s+1}\\ 
b+\beta _{s+1}
\end{gathered} \qquad
\begin{aligned}[t]
&1\leq j\leq s\\ 
&2\leq j\leq n-s-1 
\end{aligned}
\end{equation*}
whereas the $s$-cut of $\nu^-$ and $\rho^-$ has
part sizes 
\begin{equation*}
\begin{gathered}[t]
a+b+\alpha _j+\beta _j  \\
a+b+\alpha _{s+j}+\beta _{n-j+1} \\
c+\alpha _{s+1}\\ 
d+\beta _{s+1}.
\end{gathered} \qquad
\begin{aligned}[t]
&1\leq j\leq s\\ 
&2\leq j\leq n-s-1 
\end{aligned}
\end{equation*}
These lists must agree.  Consequently we must have
$$a+\alpha _{s+1}=c+\alpha _{s+1} \quad \text{or}\quad 
a+\alpha _{s+1}=d+\beta _{s+1},$$ 
for all $s$.
If, for any $s$, we are in the first situation, 
then $a=c$ and $b=d$ as desired. 
If not then
$$a+\alpha _{s+1}=d+\beta _{s+1} \quad \text{and} \quad 
c+\alpha _{s+1}=b+\beta _{s+1}$$
for all $0\leq s\leq n-1$.  In particular, since 
$\alpha_{n-1}=\beta_{n-1}=0$,
we have $a=d$ and $b=c$, ensuring $\alpha _j=\beta _j$ for 
$1\leq j\leq n-1$.
\end{proof}

\begin{example}
If $n=3$,
$$
\lambda = 
\begin{matrix}
\ti&\ti&\ti&\ti&\ti \\
\ti&\ti&\ti&& \\
\ti&\ti&&&
\end{matrix}\quad\text{and}\quad
\mu = 
\begin{matrix}
\ti&\ti \\
\ti&\ti \\
&
\end{matrix}
$$
then $\phi ^\lambda \otimes \phi ^\mu \cong \phi ^\nu \otimes \phi ^\rho$
if and only if
$\{\nu, \rho\}$ is equal to one of
$$\{\lambda, \mu\}\,, \quad
\left\{
\begin{smallmatrix}
\ti&\ti&\ti&\ti \\
\ti&\ti&& \\
\ti&&&
\end{smallmatrix}\ ,\ %
\begin{smallmatrix}
\ti&\ti&\ti \\
\ti&\ti&\ti \\
\ti&&
\end{smallmatrix}
\right\} \quad\text{or}\quad
\left\{
\begin{smallmatrix}
\ti&\ti&\ti \\
\ti&& \\
&&&
\end{smallmatrix}\ ,\ %
\begin{smallmatrix}
\ti&\ti&\ti&\ti \\
\ti&\ti&\ti&\ti \\
\ti&\ti&&
\end{smallmatrix}
\right\}.
$$
\end{example}

%
%
We consequently obtain a strict lower bound on $n$, in terms of the 
size of the partitions, to guarantee that \eqref{tensorequal}
has only trivial solutions.
\begin{corollary} 
Suppose $m, m'$ are non-negative integers.  
If $n >\max\{m,m'\}$, then for any partitions
$\lambda \vdash m$ and $\mu\vdash m'$, we have that
\begin{equation}\phi ^\lambda \otimes \phi ^\mu \cong \phi ^\nu \otimes \phi ^\rho\label{trivstuff}\end{equation}
has only the trivial solution $\{\nu, \rho\} = \{\lambda, \mu\}$.
If $\min\{m,m'\} \geq 2$ and $n \leq \max\{m,m'\}$, then there exist 
 $\lambda \vdash m$ and $\mu \vdash m'$ for which \eqref{trivstuff} has non-trivial solutions.
\end{corollary}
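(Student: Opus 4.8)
The plan is to read both assertions directly off the Theorem just proved, so no new combinatorics is needed.

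For the first assertion, suppose $n>\max\{m,m'\}$ and $\lambda\vdash m$, $\mu\vdash m'$. Then $\ell(\lambda)\le m<n$ and $\ell(\mu)\le m'<n$, so $\lambda_n=\mu_n=0$ and hence $\lambda^-=\lambda$, $\mu^-=\mu$. If $\phi^\lambda\otimes\phi^\mu\cong\phi^\nu\otimes\phi^\rho$, the Theorem forces $\nu_n+\rho_n=\lambda_n+\mu_n=0$, so $\nu_n=\rho_n=0$, $\nu^-=\nu$, $\rho^-=\rho$, and therefore $\{\nu,\rho\}=\{\nu^-,\rho^-\}=\{\lambda^-,\mu^-\}=\{\lambda,\mu\}$. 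That settles the first half in a few lines.

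For the second assertion I may assume $m\ge m'$, so $m'\ge 2$ and $n\le m$; the goal is to produce $\lambda\vdash m$ and $\mu\vdash m'$ together with a non-trivial $\{\nu,\rho\}$. By the Theorem it suffices to build $\lambda,\mu$ so that the data $(\{\lambda^-,\mu^-\},\,\lambda_n+\mu_n)$ admits a perturbation of one of two kinds. The \emph{swap}: if $\lambda^-\ne\mu^-$, put $\nu^-=\mu^-$, $\rho^-=\lambda^-$ and keep $\lambda_n+\mu_n$; since a partition determines the number of its height‑$n$ columns, one checks $\{\nu,\rho\}=\{\lambda,\mu\}$ would force $\nu_n=\mu_n$, so \emph{any} choice with $\nu_n\ne\mu_n$ (available as soon as $\lambda_n+\mu_n\ge 1$) is non-trivial. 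The \emph{redistribution}: if $\lambda^-=\mu^-$, put $\nu^-=\rho^-=\lambda^-$; here $\{\nu,\rho\}=\{\lambda,\mu\}$ forces $\{\nu_n,\rho_n\}=\{\lambda_n,\mu_n\}$, so any other unordered pair with the same sum works, and one exists exactly when $\lambda_n+\mu_n\ge 2$. To realize this I take $\lambda,\mu$ of the shape ``one row plus several columns of height $n$'', the number of height‑$n$ columns being $\lfloor m/n\rfloor$, resp. $\lfloor m'/n\rfloor$, and the one‑row part the corresponding remainder. If $m\not\equiv m'\pmod n$ the remainders $(m\bmod n)$ and $(m'\bmod n)$ differ, $\lfloor m/n\rfloor\ge 1$ (using $n\le m$), and the swap applies. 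If $m\equiv m'\pmod n$ and $\lfloor m/n\rfloor+\lfloor m'/n\rfloor\ge 2$, I take $\lambda^-=\mu^-$ equal to the common remainder and redistribute; this also covers $n=1$ (with $\lambda=(m)$, $\mu=(m')$, $\nu=(m+m')$, $\rho=\emptyset$).

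The only triples these two mechanisms miss are those with $m\equiv m'\pmod n$ and $\lfloor m/n\rfloor+\lfloor m'/n\rfloor=1$; since $n\le m$ gives $\lfloor m/n\rfloor\ge 1$, this collapses to $m=m'+n$ with $2\le m'<n$ (in particular $n\ge 3$), which I will handle by the explicit choice $\lambda=(m',2,1^{n-2})\vdash m$, $\mu=(m')\vdash m'$, for which $\lambda^-=(m'-1,1)\ne(m')=\mu^-$, $\lambda_n=1$, $\mu_n=0$, and the swap (with $\nu_n=1$) gives the non-trivial solution $\{\nu,\rho\}=\{(m'+1,1^{n-1}),(m'-1,1)\}$. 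I expect the main obstacle to be purely organizational: pinning down precisely which boundary triples $(m,m',n)$ leave neither generic mechanism enough slack — namely exactly the family above. The only genuine verification, carried out case by case, is that the $\{\nu,\rho\}$ produced is really distinct from $\{\lambda,\mu\}$, and that follows each time by comparing lengths, equivalently by counting columns of height $n$, of the partitions involved.
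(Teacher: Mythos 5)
The paper states this corollary without giving a proof, so there is no written argument of the authors' to compare against; I can only verify yours, and it is correct. The first half is immediate: $n>\max\{m,m'\}$ forces $\ell(\lambda),\ell(\mu)<n$, hence $\lambda_n=\mu_n=0$ and $\lambda^-=\lambda$, $\mu^-=\mu$; the theorem then forces $\nu_n+\rho_n=0$ and $\{\nu,\rho\}=\{\nu^-,\rho^-\}=\{\lambda,\mu\}$. For the second half, your ``one row plus full height-$n$ columns'' construction together with the swap/redistribution dichotomy is sound: if $m\not\equiv m'\pmod n$ then $\lambda^-\neq\mu^-$ and $\lambda_n=\lfloor m/n\rfloor\ge 1$, so the swap with any $\nu_n\neq\mu_n$ gives a non-trivial pair (since $\nu^-=\mu^-\neq\lambda^-$ rules out $\nu=\lambda$, and $\nu_n\neq\mu_n$ rules out $\nu=\mu$); if $m\equiv m'\pmod n$ and $\lfloor m/n\rfloor+\lfloor m'/n\rfloor\ge 2$, the redistribution works because there are at least two unordered nonnegative pairs summing to $\lambda_n+\mu_n$; and you correctly isolate the lone remaining family $m=m'+n$ with $2\le m'<n$, handling it with $\lambda=(m',2,1^{n-2})$, $\mu=(m')$, whose swap $\{(m'+1,1^{n-1}),\,(m'-1,1)\}$ is visibly distinct from $\{\lambda,\mu\}$. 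The small cases $n=1,2$ never reach the residual family (it needs $n\ge 3$), and your $n=1$ redistribution $\{(m+m'),\,0\}$ is non-trivial since $m,m'>0$. In short: a correct and complete deduction from the theorem, with the case analysis cleanly organized.
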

 
\section{Schur non-negativity}

A question that has received much attention recently, for example
\cite{LamPostnikovP, RhoadesSkandera},
is the question of Schur
non-negativity.  The notion of Schur non-negativity is of interest as it arises
in the study of  algebraic geometry \cite{Fomin_etal}, quantum groups \cite{LLT}, and branching problems 
in representation theory \cite{Okounkov}. 

One of the most basic Schur non-negativity questions is the following. 
Given partitions $\lambda, \mu,
\nu, \rho$ when is the difference $s_\lambda s_\mu - s_\nu s_\rho$
a non-negative linear combination of Schur functions?  Note that
if $s_\lambda s_\mu - s_\nu s_\rho$ is Schur non-negative then
the same is certainly true of the corresponding expression in finitely 
many variables
$$s_\lambda(x_1,\ldots ,x_n)s_\mu(x_1,\ldots ,x_n)- 
s_\nu(x_1,\ldots ,x_n)s_\rho(x_1,\ldots ,x_n).$$

The
following yields a test for failure of Schur non-negativity.

\begin{corollary}
For $0 \leq s \leq n-1$, let 
$\kappa = \kappa_1 \ldots \kappa_n$ 
be the $s$-cut of $\lambda$ and $\mu$, and let
$\xi = \xi_1 \ldots \xi_n$ 
be the $s$-cut of $\nu$ and $\rho$.
Form the sequences
$$
\sigma(s) := \kappa_1 \ldots \kappa_s \xi_{s+1} \ldots \xi_n
\quad\text{and}\quad
\tau(s) := \xi_1 \ldots \xi_s \kappa_{s+1} \ldots \kappa_n.
$$
If there exists an $s$ for which $\tau(s)$ is lexicographically 
greater than $\sigma(s)$, then 
$$s_\lambda(x_1,\ldots ,x_n)s_\mu(x_1,\ldots ,x_n)- 
s_\nu(x_1,\ldots ,x_n)s_\rho(x_1,\ldots ,x_n)$$
is not Schur non-negative. 
\end{corollary}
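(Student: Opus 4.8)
The plan is to argue by contraposition: assuming that
$$s_\lambda(x_1,\ldots,x_n)s_\mu(x_1,\ldots,x_n)-s_\nu(x_1,\ldots,x_n)s_\rho(x_1,\ldots,x_n)$$
is Schur non-negative, I will show that $\tau(s)$ is lexicographically at most $\sigma(s)$ for every $s$ with $0\leq s\leq n-1$. The first step is to convert Schur non-negativity into an inequality between Littlewood--Richardson coefficients. Expanding $s_\lambda s_\mu$ and $s_\nu s_\rho$ in $n$ variables by the Littlewood--Richardson rule and using that the Schur polynomials $s_\eta(x_1,\ldots,x_n)$ with $\ell(\eta)\leq n$ are linearly independent, Schur non-negativity of the difference forces $c^\eta_{\lambda\mu}\geq c^\eta_{\nu\rho}$ for every partition $\eta$ with $\ell(\eta)\leq n$; in particular $c^\eta_{\nu\rho}>0$ implies $c^\eta_{\lambda\mu}>0$.

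Now fix $s$ and let $\kappa$, $\xi$ be the $s$-cuts of $\lambda,\mu$ and of $\nu,\rho$ respectively. One checks directly from Definition~\ref{stcut} that the first $s$ parts of $\kappa$ are $\lambda_1+\mu_1,\ldots,\lambda_s+\mu_s$, and those of $\xi$ are $\nu_1+\rho_1,\ldots,\nu_s+\rho_s$. Suppose first that $\nu_j+\rho_j=\lambda_j+\mu_j$ for all $j\leq s$. Then $\kappa$ and $\xi$ agree in their first $s$ parts, so the sequences collapse to $\sigma(s)=\xi$ and $\tau(s)=\kappa$. By Lemma~\ref{positivecoeff} we have $c^\xi_{\nu\rho}>0$, whence $c^\xi_{\lambda\mu}>0$ by the previous paragraph; since also $\xi_j=\lambda_j+\mu_j$ for $j\leq s$, the partition $\xi$ lies in the $s$-poset of $\lambda$ and $\mu$ (Definition~\ref{ordset}). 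Lemma~\ref{minelement} then gives $\kappa\leq\xi$ lexicographically, that is, $\tau(s)\leq\sigma(s)$.

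In the remaining case let $i\leq s$ be least with $\nu_i+\rho_i\neq\lambda_i+\mu_i$, and let $\pi$ be the partition with parts $\pi_j=\nu_j+\rho_j$. Taking $s=n-1$ in Definition~\ref{stcut} exhibits $\pi$ as the $(n-1)$-cut of $\nu,\rho$, so $c^\pi_{\nu\rho}>0$ by Lemma~\ref{positivecoeff}, and hence $c^\pi_{\lambda\mu}>0$. By the classical dominance bound on Littlewood--Richardson coefficients (see \cite{MacD}), $c^\eta_{\lambda\mu}=0$ unless $\eta$ is dominated by the partition with parts $\lambda_j+\mu_j$; since dominance refines the lexicographic order on partitions of a fixed size, $\pi$ is lexicographically at most that partition, and therefore at the first index where they differ we have $\nu_i+\rho_i<\lambda_i+\mu_i$. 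Thus $\tau(s)$ and $\sigma(s)$ agree in positions $1,\ldots,i-1$ while $\tau(s)_i=\nu_i+\rho_i<\lambda_i+\mu_i=\sigma(s)_i$, so again $\tau(s)\leq\sigma(s)$.

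Since $\tau(s)\leq\sigma(s)$ in both cases and for all $s$, the contrapositive is precisely the statement of the corollary. The conceptual core is that Lemma~\ref{minelement} pins the $s$-cut down as the lexicographically smallest partition compatible with the Littlewood--Richardson data, and that the coefficient inequality extracted in the first paragraph transports such compatibility from the pair $\nu,\rho$ to the pair $\lambda,\mu$. I expect the two places needing care to be this extraction step (where one must be attentive to the restriction $\ell(\eta)\leq n$, since the hypothesis concerns only finitely many variables) and the invocation of the lexicographic maximality of the partition with parts $\lambda_j+\mu_j$ in the second case, which is the one ingredient not already supplied by the earlier lemmas.
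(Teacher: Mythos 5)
Your proof is correct, and it is essentially the paper's argument in contrapositive form. The paper argues directly: given $\tau(s) > \sigma(s)$, letting $k$ be the first index where $\kappa$ and $\xi$ differ, it shows $c_{\lambda\mu}^{\xi}=0$ (via the LR/dominance bound when $k\leq s$, via Lemma~\ref{minelement} when $k>s$) while $c_{\nu\rho}^{\xi}>0$ by Lemma~\ref{positivecoeff}, so $s_{\xi}$ appears with a negative coefficient. You instead assume Schur non-negativity, extract $c_{\lambda\mu}^{\eta}\geq c_{\nu\rho}^{\eta}$ for all $\eta$ with $\ell(\eta)\leq n$, and deduce $\tau(s)\leq\sigma(s)$; your case (a) (first $s$ rows agree) is exactly the contrapositive of the paper's $k>s$ branch via Lemma~\ref{minelement}, and your case (b) (first $s$ rows disagree) replays the paper's $k\leq s$ branch but uses the $(n-1)$-cut $\pi$ together with the dominance bound, where the paper reaches the same conclusion working directly with the $s$-cut $\xi$. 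Both uses of dominance are the same fact stated slightly differently, so the two proofs rely on identical ingredients; yours is a bit longer because of the detour through $\pi$, but it correctly flags the two delicate points (linear independence of Schur polynomials in $n$ variables, and the lexicographic consequence of dominance).
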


\begin{proof}
Suppose the $s$-cut of $\lambda$ and $\mu$ is not equal to the
$s$-cut of $\nu$ and $\rho$, and let $k$ be the first index in
which they differ.  If $k \leq s$, and $\xi_k > \kappa_k$, then by
the Littlewood-Richardson rule, $c_{\lambda\mu}^\xi = 0$.
On the other hand if
$k > s$ and $\kappa_k > \xi_k$, then the same is true
by Lemma~\ref{minelement}.  
In either case, by Lemma~\ref{positivecoeff}, $c_{\nu\rho}^\xi > 0$,
and thus 
$$s_\lambda(x_1,\ldots ,x_n)s_\mu(x_1,\ldots ,x_n)- 
s_\nu(x_1,\ldots ,x_n)s_\rho(x_1,\ldots ,x_n)$$
is not Schur non-negative.
\end{proof}


\begin{example}
Suppose $n=3$, and
\begin{xalignat*}{2}
\lambda &= 310  & \nu &= 220 \\
\mu &= 110  & \rho &= 200.
\end{xalignat*}
Then $\sigma(0) =  222 < 321 = \tau(0)$.  Thus we can conclude that 
$s_\lambda s_\mu - s_\nu s_\rho$ is not Schur non-negative.
On the other hand $\sigma(1) = 420 > 411 = \tau(1)$.  Thus
$s_\nu s_\rho - s_\lambda s_\mu$ is also not Schur non-negative.
\end{example}
\section{Acknowledgements} The authors are grateful to Matthew Morin
for helpful discussions, Christopher Ryan for providing some data
and Mark Skandera for suggesting the problem.  They would also like to 
thank the referee for illuminating comments.


\end{document}